\newtheorem{theorem}{Theorem}[section]
\newtheorem{corollary}[theorem]{Corollary}
\newtheorem{definition}{Definition}[section]
\newtheorem{proposition}[theorem]{Proposition}
\numberwithin{equation}{section}
\def\R{\mathbb{R}}
\def\bx{\mathbf{x}}
\def\bX{\mathbf{X}}
\def\by{\mathbf{y}}
\def\bu{\mathbf{u}}
\def\bv{\mathbf{v}}
\def\bbf{\mathbf{f}}
\def\bU{\mathbf{U}}
\def\bA{\mathbf{A}}
\def\bB{\mathbf{B}}
\def\bp{\mathbf{p}}
\def\bP{\mathbf{P}}
\def\bQ{\mathbf{Q}}
\def\bR{\mathbf{R}}
\def\bH{\mathbf{H}}
\def\bK{\mathbf{K}}
\def\cero{\mathbf{0}}
\title[Optimal feedback control]{Optimal feedback control, linear first-order PDE systems, and obstacle problems}
\author{Pablo Pedregal}
\thanks{
Universidad de Castilla La Mancha, INEI, 
Campus de Ciudad Real (Spain). Research supported by
MTM2013-47053-P of the Mineco (Spain), by PEII-2014-010-P of the Conserjer\'\i a de Cultura (JCCM), and by grant GI20152919 of UCLM.
 e-mail:{\tt pablo.pedregal@uclm.es
}}
\begin{document}

\maketitle
   \begin{abstract}
   We introduce an alternative approach for the analysis and numerical approximation of the optimal feedback control mapping. It consists in looking at a typical optimal control problem in such a way that feasible controls are mappings depending both in time and space. In this way, the feedback form of the problem is built-in from the very beginning. Optimality conditions are derived for one such optimal mapping, which by construction is the optimal feedback mapping of the problem. In formulating optimality conditions, costates in feedback form are solutions of linear, first-order transport systems, while optimal descent directions are solutions of appropriate obstacle problems. 
We treat situations with no constraint-sets for control and state, as well as the more general case where a constraint-set is considered for the control variable. 
    \end{abstract}

\section{Introduction}
Consider the optimal control problem
$$
\hbox{Minimize in }\bu(s)\in \bK:\quad I(\bu)=\int_0^T F(\bx(s), \bu(s))\,ds+g(\bx(T))
$$
subject to
$$
\bx'(s)=\bbf(\bx(s), \bu(s))\hbox{ in }(0, T),\quad \bx(0)=\bx_0, \bx(s)\in\Omega
$$
where:
\begin{itemize}
\item $T>0$ is the time horizon considered;
\item $\Omega\subset\R^N$ is the feasible set for the state variable $\bx:(0, T)\to\Omega$;
\item $\bK\subset\R^m$ is the feasible set for the control variable $\bu:(0, T)\to \bK$;
\item $\bx_0\in\R^N$ is the vector determining the state of the system when we start to care about the control problem;
\item $F:\Omega\times \bK\to\R$ is the density for the cost functional, while $g:\Omega\to\R$ is the contribution depending on the final state;
\item $\bbf:\Omega\times \bK\to\R^N$ is the map providing the state equation that governs the dynamics of the system.
\end{itemize}
With all of these ingredients given to us, we care about the following map
$$
\bU(t, \by):[0, T]\times\Omega\to \bK
$$
defined as follows. For $\by\in\Omega$, and $t\in[0, T]$, consider the problem
$$
\hbox{Minimize in }\bu(t)\in \bK:\quad I(\bu)=\int_t^T F(\bx(s), \bu(s))\,ds+g(\bx(T))
$$
subject to
$$
\bx'(s)=\bbf(\bx(s), \bu(s))\hbox{ in }(t, T),\quad \bx(t)=\by, \bx(s)\in\Omega.
$$
Let us assume, to let the discussion move ahead, that there is a unique optimal solution for this optimal control problem for every $t\in[0, T]$, and $\by\in\Omega$. 
Suppose $\bu(s; t, \by)$, for $s\in[t, T]$, is such optimal solution. Then we take
$$
\bU(t, \by)=\bu(t; t, \by)
$$
for a.e. $t\in[0, T]$, $\by\in\Omega$. 

\begin{definition}
This map $\bU(t, \by)$ is called the optimal, feedback control of the problem. 
\end{definition}

The relevance of this map is recorded in the following statement, which is hardly in need of further justification. It establishes that all optimal pairs for our optimal control problem above are always related through $\bU$. 
\begin{proposition}\label{inicial}
Let $(\bx(t), \bu(t))$ be an optimal pair for the control problem. Then $\bu(t)=\bU(t, \bx(t))$.  
\end{proposition}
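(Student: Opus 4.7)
The plan is to use the principle of optimality (Bellman's dynamic programming argument), showing that any tail of an optimal trajectory is itself optimal for the corresponding sub-problem starting at $\bx(t)$ at time $t$; uniqueness then forces the control values to agree.

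Fix $t\in(0,T)$ and set $\by=\bx(t)$. I would first argue that the restriction of $(\bx,\bu)$ to $[t,T]$ is a feasible pair for the sub-problem defining $\bu(s;t,\by)$: it satisfies the same state equation, the initial condition $\bx(t)=\by$ at the new initial time, and the constraints $\bx(s)\in\Omega$, $\bu(s)\in\bK$, all inherited from feasibility on $[0,T]$.

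The main step is optimality of this tail. Suppose, for contradiction, that there is a feasible $(\tilde\bx,\tilde\bu)$ on $[t,T]$ with $\tilde\bx(t)=\by$ and
\[
\int_t^T F(\tilde\bx(s),\tilde\bu(s))\,ds+g(\tilde\bx(T))<\int_t^T F(\bx(s),\bu(s))\,ds+g(\bx(T)).
\]
I would concatenate by defining $\hat\bu(s)=\bu(s)$ on $[0,t]$ and $\hat\bu(s)=\tilde\bu(s)$ on $[t,T]$, with corresponding state $\hat\bx$ equal to $\bx$ on $[0,t]$ and $\tilde\bx$ on $[t,T]$; continuity of $\hat\bx$ at $s=t$ follows from $\hat\bx(t^-)=\bx(t)=\by=\tilde\bx(t)=\hat\bx(t^+)$, so $\hat\bx$ solves $\hat\bx'=\bbf(\hat\bx,\hat\bu)$ with $\hat\bx(0)=\bx_0$, and all constraints are preserved. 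Adding $\int_0^t F(\bx,\bu)\,ds$ to both sides of the strict inequality above yields $I(\hat\bu)<I(\bu)$, contradicting optimality of $\bu$ on $[0,T]$. Hence the tail is optimal for the sub-problem.

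By the standing uniqueness assumption for the sub-problem with data $(t,\by)=(t,\bx(t))$, the restriction of $\bu$ to $[t,T]$ coincides with $\bu(\cdot;t,\bx(t))$. Evaluating at $s=t$ gives $\bu(t)=\bu(t;t,\bx(t))=\bU(t,\bx(t))$, which is the desired identity, valid for a.e.\ $t\in[0,T]$ (consistent with the a.e.\ definition of $\bU$). The only delicate point I anticipate is the concatenation step, which requires checking that gluing two admissible arcs at a matching state yields an admissible arc of the original problem; with Carathéodory-type regularity for $\bbf$ and merely measurable controls this is standard, and it is the only place where the structure of the constraint sets and the state equation is really used.
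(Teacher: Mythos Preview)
Your argument is correct and is precisely the standard dynamic-programming (Bellman) justification. The paper in fact does not prove this proposition at all: it introduces it as ``hardly in need of further justification'' and moves on, so there is no proof in the text to compare against. Your write-up supplies exactly the routine verification the authors omit, including the only nontrivial point (that concatenating the original control on $[0,t]$ with a better tail on $[t,T]$ produces an admissible competitor with strictly smaller cost), and your invocation of the standing uniqueness assumption to pass from optimality of the tail to the pointwise identity $\bu(t)=\bU(t,\bx(t))$ is appropriate.
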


The whole point of feedback control is to be able to compute (approximate) this mapping $\bU$ beforehand, so that when we come to finding the optimal solution of the original problem we are ready to adjust to disturbances that may occur during real processes by measuring (part of) the state of the system, and adjusting the optimal control through the optimal feedback mapping $\bU$.  

The classical way of trying to calculate $\bU(t, \by)$ is by considering the Hamilton-Jacobi-Bellman equation for the value function $v(t, \bx)$
\begin{equation}\label{escalar}
v_t(t, \bx)+H(\nabla v(t, \bx), \bx)=0\hbox{ in }(0, T)\times\R^{N,}\footnote{$\nabla$ designates throughout the gradient only with respect to the spatial variable $\bx$.}
\end{equation}
together with the terminal time condition $v(T, \bx)=g(\bx)$ for all $\bx\in\R^N$. Here we are taking $\Omega=\R^N$. The value function $v(t, \bx)$, and  
the hamiltonian $H(\bp, \bx)$ are defined as usual
\begin{equation}\label{hamiltoniano}
H(\bp, \bx)=\min_{\bu\in \bK} \{F(\bx, \bu)+\bp\cdot \bbf(\bx, \bu)\},
\end{equation}
and $v(t, \bx)$ is the optimal value of the above problem determining $\bU(t, \bx)$; that is, if $v(t, \bx)$ is known, then the optimal, feedback map $\bU(t, \bx)$ is precisely the vector $\bu=\bU(t, \bx)$ where the minimum
$$
\min_{\bu\in \bK} \{F(\bx, \bu)+\nabla v(t, \bx)\cdot \bbf(\bx, \bu)\}
$$
is realized. This beautiful theory is by now well-established through viscosity solutions of (\ref{escalar}). See \cite{EvansB}, \cite{soner}, for instance. 

Alternatively, one can focus directly on the field $\bv(t, \bx)=\nabla v(t, \bx)$ instead of on $v(t, \bx)$. It is elementary to argue that, by formally differentiating  the Hamilton-Jacobi-Bellman equation above with respect to $\bx$,
\begin{equation}\label{vectorial}
\bv_t(t, \bx)+H_\bp(\bv(t, \bx), \bx)\nabla\bv(t, \bx)+H_\bx(\bv(t, \bx), \bx)=0\hbox{ in }(0, T)\times\R^N
\end{equation}
together with the terminal time condition 
$$
\bv(T, \bx)=\nabla g(\bx).
$$
If $\bv(t, \bx)$ is known, then, as before, the optimal, feedback map is obtained through the optimal solution of the problem
$$
\min_{\bu\in \bK} \{F(\bx, \bu)+\bv(t, \bx)\cdot \bbf(\bx, \bu)\}.
$$

Both approaches have advantages and disadvantages. This second perspective may seem more appealing for two reasons. The first one  is that the field we need in order to compute $\bU(t, \by)$ is $\bv(t, \by)$. Bearing in mind that what we compute or approximate with the Hamilton-Jacobi-Bellman equation is $v(t, \bx)$, and then we need to approximate its spatial gradient $\nabla v(t, \bx)$, it may look reasonable to deal with a problem which directly furnishes this field $\bv(t, \bx)=\nabla v(t, \bx)$. 
The second reason is more important. Both problems (\ref{escalar}), and (\ref{vectorial}), can be treated with the method of characteristics. However, this somehow leads us back in both cases to solving the underlying Hamilton ODE system. Approximating either $v(t, \bx)$ or $\bv(t, \bx)$ through the characteristic scheme is like computing $\bU(t, \by)$ directly. Therefore, from a practical point of view, one would decide that problem (\ref{escalar}) or (\ref{vectorial}) for which numerical methods are better developed, or better known. Put it in this way, the second possibility may seem more attractive because of the semi-linear nature of (\ref{vectorial}) versus the fully-nonlinear equation (\ref{escalar}). However, (\ref{vectorial}) is a system while (\ref{escalar}) is a single equation. 
 
The truth is that from a practical viewpoint, either of the two procedures is not easy to implement, even for low dimension $N$, as hyperbolic first-order PDE or systems are delicate, and even more so is its numerical implementation (check \cite{beelertranbanks}). In addition, setting up (\ref{escalar}) or (\ref{vectorial}), require to have an explicit form of the hamiltonian $H(\bp, \bx)$ which involves to go through the minimization calculation with respect to the control variable $\bu$. Even in simple, academic examples, when a restriction set $\bK$ should be respected, the hamiltonian may be discontinuous so that the mathematical analysis of problems (\ref{escalar}) and (\ref{vectorial}) is far from straightforward. 
These practical difficulties has stirred certain interest in finding other ways to treat and approximate optimal feedback control. See \cite{bourfli}, \cite{edgoh}, \cite{rossekflego}. 

If all these difficulties could be resolved somehow, there is still the ``curse of dimension" issue which is a major barrier for all approaches including the one we describe here. It relates to the fact that differential problems cannot be solved or approximated in space $\R^N$ of high dimension $N$. Even dimension $N=3$ is pretty demanding. See however \cite{darbonosher}. 

Our motivation is the following.
\begin{quote}
Since according to Proposition \ref{inicial}, the optimal control $\bu(t)$ for the initial problem can always be represented in the form $\bu(t)=\bU(t, \bx(t))$, it may be worthwhile to look at the initial optimal control problem in the form
$$
\hbox{Minimize in }\bu(t, \bx)\in\bK:\quad 
\int_\Omega\int_0^T\int_t^TF(\bx(s), \bu(s, \bx(s)))\,ds\,dt\,d\by+g(\bx(T))
$$
subject to
$$
\bx'(s)=\bbf(\bx(s), \bu(s, \bx(s)))\hbox{ in }(t, T),\quad \bx(t)=\by, \bx(s)\in\Omega,
$$
for each pair $(t, \by)\in[0, T]\times\Omega$. 
\end{quote}

Feasible $\bu(t, \bx)$'s for this optimization problem are all possible feedback laws for the system. 
In this general format, $\bu(t, \bx)$ can be taken to be measurable in $t$ and continuous in $\bx$. Associated states $\bx(s):[t, T]\to\Omega$ will be absolutely continuous, point wise solutions of the state ODE system
$$
\bx'(s)=\bbf(\bx(s), \bu(s, \bx(s)))\hbox{ in }(t, T),\quad \bx(t)=\by.
$$
A given control mapping $\bu(s, \by)$ might have more than one associated state for a given pair $(t, \by)$ if Lipschitzianity is not enforced. 
However, to be able to treat  optimality conditions, we need some further regularity. 
Feasible control mappings will be taken from the space $L^2(0, T; H^1(\Omega; \bK))$ in order to ensure that spatial derivatives can be calculated. 

\begin{definition}\label{principal}
An optimal solution $\bU(t, \bx)$ of this optimization problem is called an optimal feedback law (for that same problem). 
\end{definition}

We would like to avoid the constraint $\bx(s)\in\Omega$ to skip further difficulties, and so we will deal instead with the optimization problem
$$
\hbox{Minimize in }\bu(t, \bx)\in\bK:\quad 
\int_D\int_0^T\int_t^TF(\bx(s), \bu(s, \bx(s)))\,ds\,dt\,d\by+g(\bx(T))
$$
subject to
$$
\bx'(s)=\bbf(\bx(s), \bu(s, \bx(s)))\hbox{ in }(t, T),\quad \bx(t)=\by, 
$$
for each pair $(t, \by)\in[0, T]\times D$, where $D$ is just a domain of interest for initial conditions, and such that state trajectories $\bx(s)\in D$ for every solution of the state system.  Even better, we can take $D=\R^N$, and suppose that we can count on some suitable growth conditions on $F$ so that the resulting integrals, with an infinite domain of integration, are finite. Since these properties will not play a central role in our analysis, we do not specify them. 

There are three main sources of concern about this problem that we plan to treat successively:
\begin{enumerate}
\item existence of an optimal control map $\bU(s, \bx)$;
\item optimality conditions that such an optimal map $\bU(s, \bx)$ should comply with;
\item procedure to approximate its values.
\end{enumerate}
We will explore these three fundamental issues in three respective sections. 

\begin{enumerate}
\item Concerning existence of optimal strategy, we will focus, for each arbitrary pair $(t, \bx)$, on the problem
$$
\hbox{Minimize in }\bv(t)\in\bK:\quad \int_t^TF(\by(s), \bu(s))\,ds+g(\by(T)),
$$
subjected to
$$
\by'(s)=\bbf(\by(s), \bv(s))\hbox{ in }(t, T),\quad \by(t)=\bx.
$$
Let $\tilde\bv(s; t, \bx)$ be its optimal solution. 

\begin{theorem}\label{existencia}
Suppose the ingredients $F$, $\bbf$, and $\bK$ of the original optimal control problem  are such that 
there is a unique optimal strategy $\tilde\bv(s; t, \bx)$, as just indicated, for every pair $(t, \bx)$. Then
$$
\bU(t, \bx)=\tilde\bv(t; t, \bx)
$$
is the optimal feedback law for the control problem according to Definition \ref{principal}. 
\end{theorem}
\item 
The statement of optimality conditions introduces a linear first-order PDE system. It tries to understand the joint dependence of the optimal feedback map $\bU(t, \bx)$ on its variables $(t, \bx)$ through its being an optimal solution of the corresponding feedback optimization problem. 

Let the costate $\bp(t, \bx)$, associated with the control $\bu(t, \bx)$, be the solution of the problem
\begin{equation}\label{primerorden}
\bp_t(t, \bx)+\nabla\bp(t, \bx)\,\bbf(\bx, \bu(t, \bx))+\bp(t, \bx)\,\nabla[\bbf(\bx, \bu(t, \bx))]=\nabla [F(\bx, \bu(t, \bx))]
\end{equation}
in $[0, T]\times\R^N$, 
under the terminal time condition $\bp(T, \bx)=\nabla g(\bx)$. Notice that the solution of this system through characteristics takes us back to solving the state and costate systems. 

\begin{theorem}
Let $\bu(t, \bx)$ be a feasible field for the feedback problem for which the linear transport system \eqref{primerorden}
admits a solution $\bp(t, \bx):[0, T]\times\R^N\to\R^N$. If $\bu(t, \bx)$ turns out to realize the minimum of the hamiltonian
$F(\bx, \bv)-\bp(t, \bx)\bbf(\bx, \bv)$ in the control variable $\bv\in \bK$
$$
F(\bx, \bu(t, \bx))-\bp(t, \bx)\bbf(\bx, \bu(t, \bx))=\min_{\bv\in \bK}\{F(\bx, \bv)-\bp(t, \bx)\bbf(\bx, \bv)\},
$$
then  $\bu(t, \bx)$ is a local minimum for the feedback optimal control problem. 
\end{theorem}
\item Finally, the basis of an iterative approximation procedure stems from optimality, and it ties together the three topics occurring in the title: optimal feedback control, linear first-order systems of PDE, and obstacle problems. 
Set 
$$
\nabla I(\bu)(t, \bx)\equiv F_\bu(\bx, \bu(t, \bx))-\bp(t, \bx)\bbf_\bu(\bx, \bu(t, \bx)).
$$
The reason for this special notation comes from the fact (see below) that that particular combination of partial derivatives occur in the differentiation of the cost function $I(\bu)$ for our feedback problem. Our main result is intimately related to optimality.

\begin{theorem}\label{optimalidad}
Let $\bu(t, \bx)$ be a feasible map for the optimal control problem under a constraint set $\bK$, which is assumed to be compact and convex, and let $\bp(t, \bx)$ be its associated costate as just indicated. Then the solution $\bU(t, \bx)$ of the obstacle problem, for each fixed time $t\in[0, T]$, 
$$
\hbox{Minimize in }\bU(t, \bx)\in \bK:\quad \int_{\R^N}\left(\frac12|\nabla\bU(t, \bx)-\nabla\bu(t, \bx)|^2+\nabla I(\bu)(t, \bx)(\bU(t, \bx)-\bu(t, \bx))\right)\,dx
$$
is a descent direction for the optimal control at $\bu(t, \bx)$,  in an average sense
$$
\int_{\R^N}\nabla I(\bu)(t, \bx)(\bU(t, \bx)-\bu(t, \bx)) \,d\bx\le0,
$$
for all $t\in[0, T]$. If such $\bu(t, \bx)$ is indeed optimal for the control problem, then 
the solution of the obstacle problem is $\bu(t, \bx)$ itself for all $t\in[0, T]$.
\end{theorem}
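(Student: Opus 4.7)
The plan is to compare the value of the obstacle functional
$$
J_t(\mathbf{V}) := \int_{\R^N}\left(\tfrac12|\nabla\mathbf{V}-\nabla\bu(t,\bx)|^2 + \nabla I(\bu)(t,\bx)\cdot(\mathbf{V}-\bu(t,\bx))\right)d\bx
$$
at the announced minimizer $\bU(t,\cdot)$ with its value at $\bu(t,\cdot)$ itself. Since $\bu(t,\cdot)\in\bK$ is always an admissible competitor and $J_t(\bu(t,\cdot))=0$, this single observation will drive both halves of the statement.

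For the descent-direction claim, I would simply exploit the minimality of $\bU(t,\cdot)$ to write $J_t(\bU(t,\cdot))\le J_t(\bu(t,\cdot))=0$, which after rearrangement yields
$$
\int_{\R^N}\nabla I(\bu)(t,\bx)\cdot(\bU(t,\bx)-\bu(t,\bx))\,d\bx \le -\tfrac12\int_{\R^N}|\nabla\bU-\nabla\bu|^2\,d\bx \le 0.
$$
The regularity hypothesis $\bU\in L^\infty(0,T;W^{1,\infty}(\R^N,\bK))$ is precisely what is needed so that $\bU$ itself can be used as an admissible feedback perturbation of $\bu$ in the original control problem, making this bound a genuine descent condition within the feedback class.

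For the converse half, I would first convert optimality of $\bu$ for the control problem into a pointwise-in-$t$ variational inequality on $\nabla I(\bu)$. Starting from the first-variation identity recalled in the excerpt, convexity of $\bK$ makes $\bu+\epsilon(\mathbf{V}-\bu)$ admissible for any feasible $\mathbf{V}$ and $\epsilon\in[0,1]$, so optimality of $\bu$ produces a time-integrated inequality which a standard time-localization (testing against perturbations of the form $\eta(s)(\mathbf{V}-\bu)$ with $\eta$ concentrating near an arbitrary $t_0$) upgrades to
$$
\int_{\R^N}\nabla I(\bu)(t,\bx)\cdot(\mathbf{V}(t,\bx)-\bu(t,\bx))\,d\bx\ge 0
$$
for every admissible $\mathbf{V}$ and almost every $t\in[0,T]$. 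Together with the nonnegativity of the quadratic gradient term, this forces $J_t(\mathbf{V})\ge 0 = J_t(\bu(t,\cdot))$ for every admissible competitor, so that $\bu(t,\cdot)$ is a minimizer; strict convexity of $J_t$ in $\nabla\mathbf{V}$ then singles it out as the solution.

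The main obstacle I anticipate is the time-localization step itself: the first-variation formula stated in the paper integrates only over $t$, and converting the resulting integrated inequality into the for-almost-every-$t$ condition demanded by the obstacle formulation will require both sufficient regularity of $s\mapsto\bu(s,\cdot)$ and a measurable-selection argument to ensure that the localized perturbations stay within $L^\infty(0,T;W^{1,\infty}(\R^N,\bK))$. A secondary technical point is checking that the $W^{1,\infty}$ regularity assumed on the obstacle solution is compatible with the Euler--Lagrange framework underlying the obstacle problem, so that the basic comparison $J_t(\bU)\le 0$ transfers cleanly to the pointwise descent assertion in the statement.
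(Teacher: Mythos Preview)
Your proposal is correct and close to the paper's argument, with one small technical difference worth noting. For the descent half, the paper does not compare functional values $J_t(\bU)\le J_t(\bu)=0$ as you do; instead it invokes the variational inequality satisfied by the minimizer $\bU$ of the obstacle problem and tests it with the admissible competitor $\bV=\bu$. That yields
\[
\int_{\R^N}\nabla I(\bu)(t,\bx)\,(\bU(t,\bx)-\bu(t,\bx))\,d\bx\le -\int_{\R^N}|\nabla\bU(t,\bx)-\nabla\bu(t,\bx)|^2\,d\bx,
\]
i.e.\ the same inequality you obtain but without the factor $\tfrac12$. Either route gives the stated nonpositivity; the paper's version is simply the sharper one that first-order optimality affords over zeroth-order comparison.

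For the converse half, the paper is in fact terser than you are: it does not spell out the time-localization argument at all, relying implicitly on the pointwise inequality $\nabla I(\bu)(t,\bx)\cdot(\bv-\bu(t,\bx))\ge0$ for $\bv\in\bK$ that comes from the preceding theorem on Hamiltonian minimization. Your plan to reach the per-$t$ spatial inequality by localizing the first-variation formula in time is a legitimate and more explicit alternative, and the obstacles you flag (measurable selection, compatibility with the $W^{1,\infty}$ class) are the right ones to watch, though none of them is fatal here. Once you have $\int_{\R^N}\nabla I(\bu)\cdot(\bV-\bu)\,d\bx\ge0$ for every admissible $\bV$, your conclusion that $\bu$ minimizes $J_t$ is immediate; uniqueness then comes from the cited obstacle-problem theory rather than from strict convexity in $\nabla\bV$ alone, so you may want to phrase that last step accordingly.
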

This result is the basis of an iterative approximation procedure:
\begin{enumerate}
\item Initialization. Take any initial $\bu_0(t, \bx)\in\bK$.
\item Iterative scheme until convergence: if $\bu_j(t, \bx)$ is known, then 
\begin{enumerate}
\item Compute the costate $\bp_j(t, \bx)$ by solving the corresponding linear, first-order PDE system for $\bu=\bu_j$.
\item Set $\nabla I(\bu_j)(t, \bx)=F_\bu(\bx, \bu_j(t, \bx))-\bp_j(t, \bx)\bbf_\bu(\bx, \bu_j(t, \bx))$.
\item Solve the obstacle problem to determine $\bU_j(t, \bx)$.
\item Update $\bu_j$ to $\bu_j+\epsilon\bU_j$ for some small $\epsilon$.
\end{enumerate}
\end{enumerate}
In practice, solving the obstacle problem Step (b)(iii) may be avoided by simply taking $\bU_j(t, \bx)$ as the solution of the mathematical programming problem
$$
\nabla I(\bU_j)(t, \bx)=\min_{\bv\in\bK}\left(F_\bu(\bx, \bv)-\bp_j(t, \bx)\bbf_\bu(\bx, \bv)\right).
$$
Formally, however, this $\bU_j(t, \bx)$ might show a dependence on the variable $\bx$ too weak, as it will also depend upon the $\bx$-regularity of the costate $\bp_j$ solution of (\ref{primerorden}), for an iterative procedure to be implementable (see Section \ref{optfeed} below). This is the main reason to consider the obstacle problem in order to ensure an improved $\bx$-regularity, as required by feasibility.
\end{enumerate}

We end up by examining briefly the typical LQR problem under this perspective to see how the classical Ricatti equation is recovered. There is hardly any additional example that can be treated explicitly, so that the numerical approximation becomes crucial. 
The immediate future asks, then, for testing this viewpoint in concrete examples starting with simple academic situations and proceeding with more and more elaborate problems. We are already working on that (\cite{fontpedregal}). 

\section{Existence result}
This section treats the proof of Theorem \ref{existencia}. It refers to the existence of optimal solutions of the problem
$$
\hbox{Minimize in }\bu(t, \bx)\in\bK:\quad 
\int_\Omega\int_0^T\int_t^TF(\bx(s), \bu(s, \bx(s)))\,ds\,dt\,d\by+g(\bx(T))
$$
subject to
$$
\bx'(s)=\bbf(\bx(s), \bu(s, \bx(s)))\hbox{ in }(t, T),\quad \bx(t)=\by, 
$$
for each pair $(t, \by)\in[0, T]\times\R^N$. We will identify this problem as the feedback form of the underlying optimal control problem. 

Let us look at the inner problem
$$
\hbox{Minimize in }\bv(t)\in\bK:\quad \int_t^TF(\by(s), \bv(s))\,ds+g(\by(T)),
$$
subjected to
$$
\by'(s)=\bbf(\by(s), \bv(s))\hbox{ in }(t, T),\quad \by(t)=\bx.
$$
Let $\tilde\bv(s; t, \bx)$ be its optimal solution. We are assuming, without specifying any particular situation, that the main ingredients $(\bK, F, g, \bbf)$ of the control problem enable existence of a unique optimal solution $\tilde\bv(s; t, \bx)$ for every pair $(t, \bx)$. 

The very nature of this optimal solution $\tilde\bv(s; t, \bx)$ is such that 
$$
\tilde\bv(r; s, \tilde\by(s; t, \bx))=\tilde\bv(r; t, \bx),\quad r\in[s, T],
$$
if $\tilde\by(s; t, \bx)$ is the (optimal) trajectory associated with $\tilde\bv(s; t, \bx)$, namely
\begin{equation}\label{ley}
\frac d{ds}\tilde\by(s; t, \bx)=\bbf(\tilde\by(s; t, \bx), \tilde\bv(s; t, \bx)).
\end{equation}
In particular
\begin{equation}\label{flujo}
\tilde\bv(s; s, \tilde\by(s; t, \bx))=\tilde\bv(s; t, \bx).
\end{equation}
Set
\begin{equation}\label{feedb}
\bU(t, \bx)=\tilde\bv(t; t, \bx).
\end{equation}
Then \eqref{flujo} means
$$
\tilde\bv(s; t, \bx)=\tilde\bv(s; s, \tilde\by(s; t, \bx))=\bU(s, \tilde\by(s; t, \bx)),
$$
and  \eqref{ley} implies
$$
\frac d{ds}\tilde\by(s; t, \bx)=\bbf(\tilde\by(s; t, \bx), \bU(s, \tilde\by(s; t, \bx))).
$$
This identity clearly shows that the field $\bU(t, \bx)$ determined through \eqref{feedb} has a corresponding path $\tilde\by(s; t, \bx)$, through the state law for our feedback optimal control problem, which is the collection of optimal paths of the problem. It is therefore elementary to check that $\bU(t, \bx)$ is the optimal solution of the feedback problem, and it is therefore the optimal feedback law. 

\section{Optimality conditions for an optimal control in feedback form}\label{optfeed}
With the ingredients indicated at the end of the Introduction, we would like to prove the following optimality criterium. We start with the easier situation having no restriction set $\bK$ for the control. 

\subsection{No restriction set for controls}
We focus on the optimal feedback control problem yielding the optimal feedback mapping
$$
\hbox{Minimize in }\bu(t, \bx)\in L^2(0, T; H^1(\R^N; \R^N)):\quad \int_t^TF(\bx(s), \bu(s, \bx(s)))\,ds
$$
subject to 
$$
\bx'(s)=\bbf(\bx(s), \bu(s, \bx(s)))\hbox{ a.e. in }(t, T),\quad \bx(t)=\by,
$$
for $(t, \by)\in[0, T]\times\R^N$ given. As indicated earlier in the Introduction, the solution of the costate system
\begin{equation}\label{systco}
\bp_t(t, \bx)+\nabla\bp(t, \bx)\,\bbf(\bx, \bu(t, \bx))+\bp(t, \bx)\nabla_\bx[\bbf(\bx, \bu(t, \bx))]
= \nabla_\bx [F(\bx, \bu(t, \bx))],
\end{equation}
under the terminal time condition $\bp(T, \bx)=0$, will play, as usual, a prominent role.  

\begin{theorem}\label{prin}
Let $\bu(t, \bx)$ be a feasible field for which the linear transport system \eqref{systco}
under the terminal time condition $\bp(T, \bx)=0$ admits a solution $\bp(t, \bx):[0, T]\times\R^N\to\R^N$. If 
\begin{equation}\label{optimalidad}
F_\bu(\bx, \bu(t, \bx))-\bp(t, \bx)\bbf_\bu(\bx, \bu(t, \bx))\equiv0,
\end{equation} 
then $\bu(t, \bx)$ is an equilibrium mapping for the feedback optimal control problem. 
\end{theorem}
The proof amounts to redoing the usual calculations in the classic context with the costate, performed in this feedback scenario.
\begin{proof}
Let $\bu(t, \bx)$ be feasible, and $\bx(t)$ be (one of) its associated state(s) so that
$$
\bx'(s)=\bbf(\bx(s), \bu(s, \bx(s))\hbox{ a.e. in }(t, T),\quad \bx(t)=\by.
$$
Let $\bU(t, \bx)$ be a feasible variation of $\bu(t, \bx)$, and write $\bX(t)$ for the variation produced on $\bx$ by $\bU$ on $\bu$. Then
$$
\bx'(s)+\epsilon\bX'(s)=\bbf(\bx(s)+\epsilon\bX(s), 
\bu(s, \bx(s)+\epsilon\bX(s))+\epsilon\bU(s, \bx(s)+\epsilon\bX(s)))\hbox{ in }(t, T),\quad \bX(t)=0.
$$
By differentiation with respect to $\epsilon$, and setting $\epsilon=0$ afterwards, we should have
\begin{equation}\label{optuno}
\bX'=\left(\bbf_\bx+\bbf_\bu\nabla\bu\right)\bX+\bbf_\bu\bU\hbox{ in }(t, T),\quad \bX(t)=0,
\end{equation}
where $\bbf_\bx$, $\bbf_\bu$ are evaluated at $(\bx(s), \bu(s, \bx(s)))$, and $\nabla\bu$ and $\bU$ are evaluated at $(s, \bx(s))$. Going over the same kind of calculations for the cost functional, we arrive at
\begin{equation}\label{optdos}
\int_t^T\left[\left(F_\bx+F_\bu\nabla\bu\right)\bX+F_\bu\bU\right]\,ds.
\end{equation}
Suppose that the field $\bu(s, \bx)$ is such that the conditions on the statement holds for $\bp(s, \bx)$. Then, it is clear that if we put $\bp(s)=\bp(s, \bx(s))$ for $\bx(s)$ the corresponding state,
\begin{gather}
\bp'(s)+\bp(s)[\bbf_\bx(\bx(s), \bu(s, \bx(s)))+\bbf_\bu(\bx(s), \bu(s, \bx(s)))\nabla\bu(s, \bx(s))]\label{coestadoo}\\
= F_\bx(\bx(s), \bu(s, \bx(s)))+F_\bu(\bx(s), \bu(s, \bx(s)))\nabla\bu(s, \bx(s)),\quad s\in[0, T],\nonumber
\end{gather}
with $\bp(T)=0$. If we take this information back to (\ref{optdos}), it is straightforward to get 
$$
\int_t^T\left[\left(\bp'(s)+\bp(s)(\bbf_\bx+\bbf_\bu\nabla\bu)\right)\bX+F_\bu\bU\right]\,ds.
$$
Integrating by parts in the first term, and bearing in mind that the boundary terms drop out, we obtain
$$
\int_t^T\left[-\bp\bX'+\bp(\bbf_\bx+\bbf_\bu\nabla\bu)\bX+F_\bu\bU\right]\,ds.
$$
Taking into account (\ref{optuno}), we can also write
$$
\int_t^T(-\bp\bbf_\bu\bU+F_\bu\bU)\,dt,
$$
which vanishes. The arbitrariness of $\bU$, $t$, and $\by$ finishes the proof.
\end{proof}

\subsection{Controls under constraints}
Constraints on the control variable through a set $\bK\subset\R^m$ can be easily incorporated. We assume that $\bK$ is compact and convex. 
We need to take into account that variations are now of the form $\bu+\epsilon(\bU-\bu)$ for arbitrary, feasible $\bU$, and so optimality conditions are one-sided conditions. This leads naturally to variational inequalities and obstacle problems. 
\begin{theorem}
Let $\bu(t, \bx)$ be as in Theorem \ref{prin}, and taking values on a convex, compact set $\bK$, for which $\bp(t, \bx):[0, T]\times\R^N\to\R^N$ is a solution of the problem
$$
\bp_t(t, \bx)+\nabla\bp(t, \bx)\,\bbf(\bx, \bu(t, \bx))+\bp(t, \bx)\nabla_\bx[\bbf(\bx, \bu(t, \bx))]
= \nabla_\bx [F(\bx, \bu(t, \bx))],
$$
with the terminal time condition $\bp(T, \bx)=0$. If $\bu(t, \bx)$ turns out to realize the minimum of the hamiltonian
$F(\bx, \bv)-\bp(t, \bx)\bbf(\bx, \bv)$ in the control variable $\bv\in \bK$
$$
F(\bx, \bu(t, \bx))-\bp(t, \bx)\bbf(\bx, \bu(t, \bx))=\min_{\bv\in \bK}\{F(\bx, \bv)-\bp(t, \bx)\bbf(\bx, \bv)\},
$$
then  $\bu(t, \bx)$ is a local minimum for the feedback optimal control problem. 
\end{theorem}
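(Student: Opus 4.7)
The structure follows the unconstrained theorem proved just above, adapted to handle the one-sided variations forced by the constraint set $\bK$. The key observation is that since $\bK$ is convex, for any feasible map $\bU(t,\bx)\in\bK$ the combinations $\bu+\epsilon(\bU-\bu)$ remain in $\bK$ for all $\epsilon\in[0,1]$, so we get access to the whole directional derivative $\left.\frac{d}{d\epsilon}I(\bu+\epsilon(\bU-\bu))\right|_{\epsilon=0}$ rather than just the two-sided variations of the unconstrained case.

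I would first fix an arbitrary feasible $\bU(t,\bx)\in\bK$ and set $\bu_\epsilon = \bu+\epsilon(\bU-\bu)$. Let $\bx_\epsilon(s)$ denote the trajectory starting at $\by$ at time $t$ driven by $\bu_\epsilon$, and write $\bX(s)$ for the variation $\left.\partial_\epsilon\bx_\epsilon(s)\right|_{\epsilon=0}$. Exactly as in the previous proof, differentiation of the state equation at $\epsilon=0$ yields
$$
\bX'(s)=(\bbf_\bx+\bbf_\bu\nabla\bu)\bX+\bbf_\bu(\bU-\bu)\hbox{ in }(t,T),\quad \bX(t)=0,
$$
where the partials of $\bbf$ are evaluated at $(\bx(s),\bu(s,\bx(s)))$ and $\bU-\bu$ at $(s,\bx(s))$. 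Differentiating the cost integral gives the analog of (\ref{optdos}) with $\bU$ replaced by $\bU-\bu$.

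The second step is the integration-by-parts manipulation with the costate $\bp$. Since the hypothesis on $\bp$ is the same first-order transport system as before, the same identity (\ref{coestadoo}) holds along the trajectory, and following the same sequence of substitutions I arrive at
$$
\left.\frac{d}{d\epsilon}I(\bu_\epsilon)\right|_{\epsilon=0}=\int_t^T\bigl[F_\bu(\bx(s),\bu(s,\bx(s)))-\bp(s)\bbf_\bu(\bx(s),\bu(s,\bx(s)))\bigr](\bU(s,\bx(s))-\bu(s,\bx(s)))\,ds.
$$
This is precisely the integral of $\nabla I(\bu)(s,\bx(s))(\bU-\bu)$ along the state, exactly as advertised for the unconstrained setting.

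The third step is to exploit the pointwise Hamiltonian minimization hypothesis. Since $\bK$ is convex and $\bu(t,\bx)$ achieves the minimum of $\bv\mapsto F(\bx,\bv)-\bp(t,\bx)\bbf(\bx,\bv)$ over $\bv\in\bK$, the first-order necessary condition for this finite-dimensional minimization reads
$$
\bigl[F_\bu(\bx,\bu(t,\bx))-\bp(t,\bx)\bbf_\bu(\bx,\bu(t,\bx))\bigr](\bv-\bu(t,\bx))\ge0\quad\forall\,\bv\in\bK.
$$
Applying this pointwise with $\bv=\bU(s,\bx(s))$ makes the integrand nonnegative, hence the directional derivative is nonnegative for every feasible $\bU$. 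Because $\epsilon\mapsto I(\bu_\epsilon)$ is smooth on $[0,1]$ by the regularity hypotheses, nonnegativity of the one-sided derivative along every admissible ray at $\bu$ gives the local-minimum property claimed.

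\textbf{Main obstacle.} The computational part is essentially a bookkeeping repeat of the previous proof; the only delicate point is making precise what ``local minimum'' means here and checking that the first-order variational inequality genuinely suffices. The cleanest reading is that the directional derivative along every feasible ray is nonnegative, which is the standard variational-inequality notion of local optimality for a convex admissible set; should one want a genuine strict-minimum statement one would need to invoke convexity of the Hamiltonian in $\bv$, which is not assumed here.
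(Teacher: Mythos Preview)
Your proposal is correct and follows essentially the same route as the paper's own proof: derive the first-order variational inequality from the pointwise Hamiltonian minimization over the convex set $\bK$, plug $\bv=\bU(t,\bx)$, and combine with the costate computation from the unconstrained theorem to conclude that the directional derivative along every feasible ray $\bu+\epsilon(\bU-\bu)$ is nonnegative. The paper's argument is terser---it simply invokes the previous calculation and writes down the resulting integral---while you spell out the variation equation and the integration-by-parts step explicitly; your closing remark about the precise meaning of ``local minimum'' is a fair caveat that the paper does not address.
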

\begin{proof}
Note that the condition on the minimum implies that
$$
\left[F_\bv(\bx, \bu(t, \bx))-\bp(t, \bx)\bbf_\bv(\bx, \bu(t, \bx))\right](\bv-\bu(t, \bx))\ge0,
$$
for all $\bv\in \bK$. In particular, by choosing $\bv=\bU(t, \bx)$, we would have
$$
\left[F_\bv(\bx, \bu(t, \bx))-\bp(t, \bx)\bbf_\bv(\bx, \bu(t, \bx))\right](\bU(t, \bx)-\bu(t, \bx))\ge0.
$$
This implies that the local change on the cost functional for the variation $(\bU(t, \bx)-\bu(t, \bx))$ which is given, as above, by
$$
\int_t^T[-\bp\bbf_\bu(\bU-\bu)+F_\bu(\bU-\bu)]\,dt
$$
is non-negative. The arbitrariness of $\bU$, $t$ and $\by$ in $\bK$ yields the result. 
\end{proof}

\section{Approximation}
The two previous optimality results yield, when appropriately interpreted, an iterative approximation procedure for equilibrium mappings based on a typical steepest descent scheme with respect to a norm ensuring differentiability with respect to $\bx$. This differentiability issue makes the direction found a descent direction in the average with respect to the spatial variable $\bx$. 

Let $I(\bu)$ be the cost functional for the optimal control problem in feedback form. The computation in the proof of Theorem \ref{prin} shows that the field
$$
\nabla I(\bu)(t, \bx)\equiv F_{\bu}(\bx, \bu(t, \bx))-\bp(t, \bx)\bbf_{\bu}(\bx, \bu(t, \bx))
$$ 
regarded as a mapping of $(t, \bx)\in[0, T]\times\R^N$ represents the derivative 
$$
\left.\frac d{d\epsilon}I(\bu+\epsilon\bU)\right|_{\epsilon=0}
$$
in the sense that this derivative is actually the integral
$$
\int_t^T \nabla I(\bu)(s, \bx(s))\bU(s, \bx(s))\,ds
$$
for $\bx(s)$ the state associated with $\bu$, and initial condition $\by$ at time $t$. 

\begin{corollary}\label{media}
Let $\bu(t, \bx)$ be as in Theorem \ref{prin}, and determine the costate $\bp(t, \bx):[0, T]\times\R^N\to\R^N$ as a solution of the problem
$$
\bp_t(t, \bx)+\nabla\bp(t, \bx)\,\bbf(\bx, \bu(t, \bx))+\bp(t, \bx)\nabla_\bx[\bbf(\bx, \bu(t, \bx))]
= \nabla_\bx [F(\bx, \bu(t, \bx))],
$$
under the terminal time condition $\bp(T, \bx)=0$. Then the solution of the problem
$$
-\Delta\bU(t, \bx) +\nabla I(\bu)(t, \bx)=0\hbox{ in }\R^N
$$
for every $t\in[0, T]$, 
is a descent direction for the optimization problem at $\bu(t, \bx)$ in an average sense
$$
\int_{\R^N}\nabla I(\bu)(t, \bx)\bU(t, \bx)\,dx\le0
$$
for all $t\in[0, T]$. 
\end{corollary}
\begin{proof}
If we retake the computations in the proof of Theorem \ref{prin}, we find that the derivative of the cost functional of the optimal control in feedback form is given by the integral with respect to time $t$ and initial condition $\by$ of
$$
\int_t^T \nabla I(\bu)(s, \bx(s))\bU(s, \bx(s))\,ds,\quad \nabla I(\bu)(s, \bx)=F_{\bu}(\bx, \bu(t, \bx))-\bp(t, \bx)\bbf_{\bu}(\bx, \bu(t, \bx)),
$$
for every arbitrary perturbation $\bU(s, \bx)$. Suppose we take for this perturbation the unique solution $\bU(t, \bx)$ in $H^1(\R^N; \R^N)$ of the problem
$$
-\Delta\bU(t, \bx) +\nabla I(\bu)(t, \bx)=0\hbox{ in }\R^N
$$
for each time $t$. By using $\bU$ itself as a test function in this identity, an integration by parts leads immediately to
$$
\int_{\R^N}\nabla I(\bu)(t, \bx)\bU(t, \bx)\,dx=-\int_{\R^N}|\nabla\bU(t, \bx)|^2\,dx\le0,
$$
for each time $t$.
\end{proof}

Similar ideas for the restricted case in which we have a constraint set $\bK$, which we assume convex and compact, is to be respected can be used. 
Let $\bu(t, \bx)$ be as above,  under a constraint set $\bK$. Determine the costate $\bp(t, \bx):[0, T]\times\R^N\to\R^N$ as a solution of the problem
$$
\bp_t(t, \bx)+\nabla\bp(t, \bx)\,\bbf(\bx, \bu(t, \bx))+\bp(t, \bx)\nabla_\bx[\bbf(\bx, \bu(t, \bx))]
= \nabla_\bx [F(\bx, \bu(t, \bx))],
$$
under the terminal time condition $\bp(T, \bx)=0$. Recall that
$$
\nabla I(\bu)(t, \bx)=F_\bu(\bx, \bu(t, \bx))-\bp(t, \bx)\bbf_\bu(\bx, \bu(t, \bx)).
$$
This time we consider the obstacle problem, for each fixed time $t\in[0, T]$, 
\begin{equation}\label{obstaculo}
\hbox{Minimize in }\bU(t, \bx)\in \bK:\quad \int_{\R^N}\left(\frac12|\nabla\bU(t, \bx)-\nabla\bu(t, \bx)|^2+\nabla I(\bu)(t, \bx)(\bU(t, \bx)-\bu(t, \bx))\right)\,d\bx.
\end{equation}
It is standard to show that this problem has a unique solution $\bU(t, \bx)$ (see \cite{kinderstamp}). The underlying variational inequality  yields, in a straightforward way, that 
$$
\int_{\R^N}\nabla I(\bu)(t, \bx)(\bU(t, \bx)-\bu(t, \bx))\,d\bx\le -\int_{\R^N}|\nabla\bU(t, \bx)-\nabla\bu(t, \bx)|^2\le0
$$
for all $t$. This is Theorem \ref{optimalidad}. 

\section{The linear quadratic regulator}
Just as an illustration and as a confirmation of the perspective explained here, we look at the classical LQR situation, to check how the Ricatti equation arises in this context. One can hardly find a different situation where the previous formalism can be explicitly written.

In this case, we have the following ingredients:
\begin{enumerate}
\item $\bK$ and $\Omega$ are all of space, so that we do not have restrictions on state or control.
\item $F(\bx, \bu)$ is a quadratic integrand separately in both sets of variables
$$
F(\bx, \bu)=\frac12 \bx^*\bQ\bx+\frac12 \bu^*\bR\bu
$$
where $\bQ$ and $\bR$ are constant, symmetric, positive definite ($\bR$ strictly) matrices of the appropriate dimensions.
\item $\bbf(\bx, \bu)$ is linear in both sets of variables
$$
\bbf(\bx, \bu)=\bA\bx+\bB\bu
$$
for constant matrices $\bA$ and $\bB$ of the appropriate dimensions.
\item The cost functional typically incorporates a contribution involving the final state $\bx(T)$ in the form 
$$
\frac12 \bx(T)^*\bH\bx(T)
$$
with $\bH$, again, a symmetric, positive definite (not necessarily strictly) matrix. 
\end{enumerate}
With these ingredients, all we need to do is look at the costate equation
$$
\bp_t(t, \bx)+\nabla\bp(t, \bx)\,\bbf(\bx, \bu(t, \bx))+\bp(t, \bx)\nabla_\bx[\bbf(\bx, \bu(t, \bx))]
= \nabla_\bx [F(\bx, \bu(t, \bx))],
$$
under the terminal time condition $\bp(T, \bx)=0$, together with 
$$
F_\bu(\bx, \bu(t, \bx))-\bp(t, \bx)\bbf_\bu(\bx, \bu(t, \bx))=\cero.
$$
In the case for a LQR situation, these two pieces of information become
$$
\bp_t+\nabla\bp(\bA\bx+\bB\bu)+\bp(\bA+\bB+\nabla\bu)=\bQ+(\bR\bu)\nabla u
$$
with $\bp(T, \bx)=\bH\bx$, and
$$
\bR\bu-\bp\bB=\cero.
$$
Hence
$$
\bu=\bR^{1}\bB^*\bp,
$$
and substituting this information to eliminate $\bu$ from the equation for the costate $\bp$, we find that
\begin{equation}\label{coes}
\bp_t+(\nabla\bp\bA-\bQ)\bx+(\nabla\bp\bB\bR^{-1}\bB^*+\bA^*)\bp=\cero.
\end{equation}
The structure of this system for the unknown $\bp(t, \bx)$ clearly suggest that if we put
$$
\bp(t, \bx)=\bP(t)\bx,
$$
for a certain matrix-valued function $\bP(t)$, then system \eqref{coes} becomes
$$
\bP'(t)\bx+(\bP(t)\bA-\bQ)\bx+(\bP(t)\bB\bR^{-1}\bB^*+\bA^*)\bP(t)\bx=\cero.
$$
The terminal condition is $\bP(T)\bx=\bH\bx$. The arbitrariness of the variable $\bx\in\R^N$ leads to conclude that 
$$
\bP'(t)+\bP(t)\bA+\bA^*\bP(t)+\bP(t)\bB\bR^{-1}\bB^*\bP(t)-\bQ=\cero\hbox{ in }(0, T),\quad \bP(T)=\bH,
$$
the classical Ricatti equation for the matrix $\bP(t)$ relating state and costate at optimality.



\begin{thebibliography}{99}
\bibitem{beelertranbanks} Beeler, S. C., Tran, H. T., Banks, H. T., Feedback control methodologies for nonlinear systems, J. Optim. Theory Appl., Vol. 107 (2000), No. 1, 1-33. 

\bibitem{bourfli} Bourdache-Siguerdidjane, H., Fliess, M., Optimal Feedback Control of Non-linear Systems, Automatica, Vol. 23 (1987), No. 3, 365-372.

\bibitem{darbonosher} Darbon, J., Osher, S., Algorithms for overcoming the curse of dimensionality for certain Hamilton-Jacobi equations arising in control theory and elsewhere, R. Math. Sciences, Vol. 3: 19 (2016).

\bibitem{edgoh} Edwards, N. J., Goh, C. J., Direct Training Method for a Continuous-Time Nonlinear Optimal Feedback Controller, J. Opt. Th. Appl., Vol. 84 (1995), No. 3, 509-528.

\bibitem{EvansB} Evans, L. C., {\it Partial Differential Equations}, Grad. Studies Math., Volume 19, AMS, 1999, Providence. 

\bibitem{soner} Fleming, W. H., Soner, H. M., {\it Controlled Markov Processes and Viscosity Solutions}, Springer, 2006, New York.

\bibitem{fontpedregal} Font, R., Pedregal, P., (in preparation). 


\bibitem{kinderstamp} Kinderlehrer, D., Stampacchia, G., {\it An Introduction to Variational Inequalities and their Application}, Academic Press, 1980, New York.



\bibitem{rossekflego} Ross, I. M., Sekhavat, P., Fleming, A., Gong, Q., Optimal Feedback Control: Foundations, Examples and Experimental Results for a New Approach, J. Guid. Cont. Dyn., Vol. 31 (2008), No. 2, 307-321.


\end{thebibliography}
\end{document}